\documentclass[oneside,english]{amsart}
\usepackage[T1]{fontenc}
\usepackage[latin9]{inputenc}
\usepackage{amsthm}
\usepackage{amstext}
\usepackage{amssymb}
\usepackage{esint}
\usepackage{amscd}
\usepackage{color}
\makeatletter
\numberwithin{equation}{section}
\numberwithin{figure}{section}
\theoremstyle{plain}
\newtheorem{thm}{\protect\theoremname}[section]

\theoremstyle{plain}
\theoremstyle{definition}

\theoremstyle{plain}
\newtheorem{lem}[thm]{\protect\lemmaname}

\theoremstyle{plain}

\theoremstyle{plain}
\makeatother

\usepackage{babel}
\providecommand{\definitionname}{Definition}
\providecommand{\lemmaname}{Lemma}
\providecommand{\theoremname}{Theorem}
\providecommand{\corollaryname}{Corollary}
\providecommand{\remarkname}{Remark}
\providecommand{\propositionname}{Proposition}
	
\DeclareMathOperator{\loc}{loc}

\DeclareMathOperator{\diam}{diam}

\begin{document}

\title[Estimates of variational eigenvalues on metric measure spaces]{Estimates of variational eigenvalues on metric measure spaces}

\author{Prashanta Garain and Alexander Ukhlov}
\begin{abstract}
In the article, we study variational eigenvalues on doubling metric measure spaces. We prove existence of minimizers of variational Neumann $(p,q)$-eigenvalues on metric measure spaces and on this base we obtain estimates of Neumann eigenvalues.
\end{abstract}
\maketitle
\footnotetext{\textbf{Key words and phrases:} Metric measure spaces, Sobolev spaces, Neumann eigenvalues} 
\footnotetext{\textbf{2020
Mathematics Subject Classification:} 35P15, 46E35, 30L15.}

\section{Introduction}

The theory of Neumann eigenvalues of nonlinear elliptic operators has a long history (see, for example, \cite{A98,AB93,EP15,GU17} and references therein). The Neumann $(p,q)$-eigenvalues in Euclidean domains $\Omega\subset\mathbb R^n$ given by 
\begin{equation}\label{maineqn}
-\Delta_p u:=-\text{div}(|\nabla u|^{p-2}\nabla u)=\lambda\|u\|_{L^q(\Omega)}^{p-q}|u|^{q-2}u\text{ in }\Omega,\quad\frac{\partial u}{\partial\nu}=0\text{ on }\partial \Omega,
\end{equation}
were considered in \cite{GPU24}. In \cite{GPU24}, estimates of the Neumann $(p,q)$-eigenvalues were obtained using variational characterizations. In the present article, we prove the existence of minimizers for the variational Neumann $(p,q)$-eigenvalue problem on metric measure spaces and, based on this, derive estimates for Neumann eigenvalues.

Let $(X, \rho, \mu)$ denote a metric measure space, where $\rho$ is a metric and $\mu$ is a doubling Borel regular measure. Let $\Omega\subset X$ be a domain. We define the first nontrivial variational Neumann $(p,q)$-eigenvalue, $1<q,p<\infty$, as 
$$
\lambda_{p,q}(\Omega)\\=\inf \left\{\frac{\|\nabla_{min} u\|_{L^p(\Omega)}^p}{\|u\|_{L^q(\Omega)}^p} :
u \in N^{1,p}(\Omega) \setminus \{0\}, \int_{\Omega} |u|^{q-2}u~d\mu(x)=0 \right\},
$$
where $\nabla_{min} u$ is a minimal upper gradient of $u\in N^{1,p}(\Omega)$.

The theory of Sobolev spaces on metric measure spaces arises in the 1990s in connection with the pointwise behavior of Sobolev functions \cite{H93, V96}. In a series of subsequent works \cite{GT01,H03,HK00,Sh00}, the foundations of the theory of Sobolev spaces were established, see \cite{HKST}.

Sobolev spaces provide the natural setting for weak solutions of partial differential equations \cite{S36}. Although it is not immediately clear what the proper counterpart of the equation \eqref{maineqn} should be in a general metric measure space, a variational approach is available, allowing eigenfunctions to be defined as minimizers of the corresponding energy integral. The regularity and existence of minimizers for the $p$-Dirichlet integral have been extensively studied in recent years (see, for example, \cite{GK,KS01,MS18} and references therein).

In the present article, we prove the existence of minimizers for the integral variational quantity corresponding to the Neumann eigenvalue problem on metric measure spaces. Based on this result, we obtain estimates for the Neumann eigenvalues in terms of the norms of the corresponding extension operators on Sobolev spaces.

\section{Sobolev spaces on metric measure spaces and embedding theorems}

Let $E\subset (X, \rho, \mu)$ be a measurable set. The Lebesgue space $L^p(E)$, $1\leq p<\infty$, is defined (see, for example, \cite{B10}) to be the space of measurable functions $f:E\to\mathbb R$ 
with the finite norm
$$
\|f\|_{L^p(E)}=\left(\int_{E}|f(x)|^p~d\mu(x)\right)^{\frac{1}{p}}.
$$
The space $L^p_{\mathrm{loc}}(E)$ is defined analogously. The following result follows from  \cite[Theorem 4.9]{B10}.

\begin{thm}\label{Brezisthm}
Let $\Omega\subset X = (X, \rho, \mu)$ be a domain. Suppose that $\{f_n\}_{n\in\mathbb{N}}$ is a sequence in $L^q(\Omega)$, $1<q<\infty$, and let $f\in L^q(\Omega)$ such that $\|f_n-f\|_{L^q(\Omega)}\to 0$ as $n\to\infty$. Then there exists a subsequence $\{f_{n_k}\}_{n_k\in\mathbb{N}}$ and a function $g\in L^q(\Omega)$ such that $f_{n_k}\to f$ a.e. in $\Omega$ and $|f_{n_k}|\leq g(x)$ a.e. in $\Omega$ for all $n_k$.
\end{thm}

Suppose that $\mu$ is a doubling measure on the metric measure space $X$ that is there exists a (minimal) constant $C_{\mu}>0$ such that the inequality
$$
 \mu(B(x,2r)) \leq C_{\mu} \mu(B(x,r))
$$
holds for all $x \in X$ and $r>0$. Then $X = (X, \rho, \mu)$ is called a doubling metric measure space.

Recall the notion of Sobolev spaces on doubling metric measure spaces \cite{HK00,Sh00}. The Newtonian-Sobolev space $N^{1,p}(X)$, $1<p<\infty$, is defined to be the space of measurable functions $f : X \to \mathbb{R}$ such that $f \in L^p(X)$ and there exists a Borel function $g: X \to [0, \infty]$ such that $g \in L^p(X)$ and 
    \begin{equation}\label{Newtoneq}
        |f(\gamma(a))-f(\gamma(b))| \leq \int_\gamma g \, ds
    \end{equation}
for $p$-a.e. rectifiable curve $\gamma: [a,b] \to X$, where the integral on $\gamma$ denotes the line integral of $g$ along $\gamma$ and $p$-a.e. means that the property holds for all curves except a family curves with a $p$-modulus zero \cite{MRSY09}. 

The function $g$ which satisfies \eqref{Newtoneq} for $p$-a.e. rectifiable curve $\gamma: [a,b] \to X$ is called a \textit{$p$-weak upper gradient of $f$} and is denoted as $\nabla_{up} f$. If a weak upper gradient $g$ satisfies \eqref{Newtoneq} for every rectifiable curves $\gamma: [a,b] \to X$, then it is called an \textit{upper gradient of $f$}. By Mazur's lemma and by Fuglede's lemma \cite{Sh00}, if $1<p<\infty$, then there exists only one \textit{minimal $p$-weak upper gradient $\nabla_{min} f \in L^p(X)$}, defined up to a set of measure zero.

The Newtonian-Sobolev space $N^{1,p}(X)$ is a Banach space with the norm
    $$
        \|f\|_{N^{(1,p)}X} = \|f\|_{L^p(X)} + \inf\limits_{\nabla_{up} f}\|\nabla_{up} f\|_{L^p(X)},
    $$
where the infimum is taken over all $p$-weak upper gradients (equivalently, over all upper gradients). If $1 < p < \infty$, we have
    $$
        \|f\|_{N^{1,p}(X)} = \|f\|_{L^p(X)} + \|\nabla_{min} f\|_{L^p(X)}.
    $$

Let $X$ be a doubling metric measure space. Then every function $f$, that belongs to $N^{1,p}(X)$, has a representative that is well-defined up to a set of a Sobolev $p$-capacity zero \cite{BB,GT02,KM96}.

The metric measure space $X = (X, \rho, \mu)$ is said to satisfy a \textit{relative lower volume decay} of order $\nu > 0$ if there is a constant $C \geq 1$ such that
$$
	\frac{\mu(B)}{\mu(B_0)} \geq C \Big(\frac{r}{r_0}\Big)^\nu
$$
whenever $B_0$ is an arbitrary ball of radius $r_0$ and $B=B(x,r)$, $x \in B_0$, $r \leq r_0$. It was shown in \cite[Lemma 8.1.13]{HKST}, that doubling metric measure spaces support this property with
$$
    \nu = \log_2 C_\mu.
$$

The metric measure space $X = (X, \rho, \mu)$ supports the \textit{weak $p$-Poincar\'e inequality} \cite{HK00}, if for all balls $B \subset X$, for all functions $f \in L^1(\sigma B)$, $\sigma>1$, and for all $p$-weak upper gradients $\nabla_{up} f$ of $f$ in the ball $\sigma B$ the following inequality holds:
    \begin{equation}\label{Poincareineq}
        \frac{1}{\mu(B)} \int_B |f - f_B| \, d\mu(x) \leq C_p (\diam(B))\left(\frac{1}{\mu(\sigma B)} \int_{\sigma B} (\nabla_{up} f)^p \, d\mu(x)\right)^{\frac{1}{p}}
    \end{equation}
with the constant $C_p$ that does not depend on $B$, $f$, and $\nabla_{up} f$. Doubling metric measure spaces $X = (X, \rho, \mu)$, which supports the weak $p$-Poincar\'e inequality, also supports the standard Sobolev embeddings on balls.

Let $\Omega \subset X$ be a bounded domain. We consider $\Omega$ as a doubling metric measure space with the metric $\rho$ and measure $\mu$ induced from $X$. The \textit{seminormed Newtonian-Sobolev space} $S^{1,p}(\Omega)$, for $1 < p < \infty$,  is defined \cite{MU24} as a space of measurable functions $f : \Omega \to \mathbb{R}$ such that $f \in L^1_{\loc}(\Omega)$, and there exists a minimal $p$-weak upper gradient $\nabla_{min} f \in L^p(\Omega)$, with the finite seminorm:
    $$
        \|f\|_{S^{1,p}(\Omega)} = \|\nabla_{min} f\|_{L^p(\Omega)}.
    $$

It has been proved \cite[Theorem 9.1.15]{HKST} that a doubling metric measure space $X = (X, \rho, \mu)$, which supports a weak $p$-Poincar\'e inequality, also supports the standard Sobolev embeddings on balls. Consequently,
$$
N^{1,p}(B) = S^{1,p}(B), \,\,1<p<\infty,
$$
for any ball $B$ in $X = (X, \rho, \mu)$.

The following theorem has been proved in \cite[Theorem 9.1.15]{HKST}: 

\begin{thm}
\label{embedding_1}
 Let $B\subset X = (X, \rho, \mu)$ be a ball in a doubling metric measure space $X$ supporting the weak $p$-Poincar\'e inequality. Then the embedding operator
 $$
    i: N^{1,p}(B) \to L^{q}(B), 1<p<\nu,
 $$
 is bounded for $1<q\leq p^{\ast}={\nu p}/{(\nu-p)}$, and is compact for $1<q<p^{\ast}$.
\end{thm}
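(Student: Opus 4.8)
The plan is to treat the two assertions separately: boundedness follows from a Sobolev--Poincar\'e inequality on the ball, while compactness follows from a finite-dimensional approximation scheme combined with interpolation.

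\textbf{Boundedness.} First I would upgrade the weak $p$-Poincar\'e inequality \eqref{Poincareineq} to a Sobolev--Poincar\'e inequality on $B$ with the critical exponent $p^{\ast}=\nu p/(\nu-p)$. The standard route is a telescoping (chaining) argument: fixing a Lebesgue point $x$ of $u$ and the nested balls $B_i=B(x,2^{-i}r)$, one writes $u(x)-u_B$ as a telescoping sum of the averages $u_{B_{i+1}}-u_{B_i}$, estimates each difference by the Poincar\'e inequality, and recognizes the resulting series as being controlled by a Riesz-type potential of $\nabla_{up} u$. Here the relative lower volume decay of order $\nu=\log_2 C_\mu$ supplies exactly the scaling needed for this potential to map $L^p(\sigma B)$ into $L^{p^{\ast}}(B)$ (this is where $p<\nu$ enters). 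This yields
$$
\left(\frac{1}{\mu(B)}\int_B |u-u_B|^{p^{\ast}}\,d\mu\right)^{1/p^{\ast}}\le C\,\diam(B)\left(\frac{1}{\mu(\sigma B)}\int_{\sigma B}(\nabla_{up} u)^p\,d\mu\right)^{1/p}.
$$
Combined with the trivial bound $|u_B|\le C\|u\|_{L^p(B)}$ (finite measure), one obtains $\|u\|_{L^{p^{\ast}}(B)}\le C\|u\|_{N^{1,p}(B)}$, i.e. boundedness of $i$ into $L^{p^{\ast}}(B)$. Since $\mu(B)<\infty$, H\"older's inequality gives the continuous inclusion $L^{p^{\ast}}(B)\hookrightarrow L^q(B)$ and hence boundedness of $i$ for every $1<q\le p^{\ast}$.

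\textbf{Compactness.} For $1<q<p^{\ast}$ I would show that $i$ maps bounded sets into precompact subsets of $L^q(B)$. It suffices to prove precompactness in $L^p(B)$ and then interpolate: if a bounded sequence $\{u_n\}\subset N^{1,p}(B)$ satisfies $u_n\to u$ in $L^p(B)$ while remaining bounded in $L^{p^{\ast}}(B)$, then for $p\le q<p^{\ast}$ the inequality $\|u_n-u\|_{L^q}\le\|u_n-u\|_{L^p}^{1-\theta}\|u_n-u\|_{L^{p^{\ast}}}^{\theta}$ forces $u_n\to u$ in $L^q(B)$, while the range $q<p$ is handled directly by H\"older. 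To obtain precompactness in $L^p(B)$, I would introduce a finite-dimensional approximation: for $\epsilon>0$ cover $B$ by balls $B_j=B(x_j,\epsilon)$ of uniformly bounded overlap (possible by doubling), choose a Lipschitz partition of unity $\{\varphi_j\}$ subordinate to $\{2B_j\}$, and set $T_\epsilon u=\sum_j u_{B_j}\varphi_j$. The Poincar\'e inequality \eqref{Poincareineq} gives the key uniform estimate $\|u-T_\epsilon u\|_{L^p(B)}\le C\epsilon\,\|\nabla_{up} u\|_{L^p(\sigma B)}$, so on a set bounded in $N^{1,p}(B)$ the error is uniformly of order $\epsilon$. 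For fixed $\epsilon$ and bounded $B$ the operator $T_\epsilon$ has finite-dimensional range with uniformly bounded coefficients, so $\{T_\epsilon u_n\}_n$ is precompact in $L^p(B)$; a diagonal argument over $\epsilon=1/k$ then extracts a subsequence that is Cauchy in $L^p(B)$.

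\textbf{Main obstacle.} The essential difficulty lies in the first step, namely the self-improvement of the exponent from the $p$-Poincar\'e inequality to the Sobolev--Poincar\'e inequality with the sharp exponent $p^{\ast}$. Controlling the telescoping series by a Riesz potential and extracting precisely the gain $\nu/(\nu-p)$ from the volume-decay condition --- rather than settling for a weak-type or truncated estimate --- is the delicate part. Once boundedness into $L^{p^{\ast}}(B)$ and the Poincar\'e error estimate for $T_\epsilon$ are in hand, the compactness step is comparatively routine.
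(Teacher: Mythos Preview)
The paper does not actually prove this theorem: it is quoted verbatim from \cite[Theorem~9.1.15]{HKST} and stated without proof. So there is no ``paper's own proof'' to compare against; you have supplied an argument where the authors simply cite the literature.

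That said, your sketch is essentially the standard proof one finds in \cite{HKST} and in Haj\l asz--Koskela \cite{HK00}: the self-improvement from the weak $p$-Poincar\'e inequality to the $(p^{\ast},p)$-Sobolev--Poincar\'e inequality via telescoping over dyadic balls and a Riesz-potential/maximal-function bound, followed by a Rellich--Kondrachov argument (finite-rank approximation by local averages plus interpolation between $L^p$ and $L^{p^{\ast}}$). Both the identification of $\nu=\log_2 C_\mu$ as the relevant ``dimension'' and the role of $p<\nu$ are correctly isolated.

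One technical point deserves care. In both halves of your argument you write the right-hand side over $\sigma B$ (e.g.\ $\|\nabla_{\up} u\|_{L^p(\sigma B)}$), but the theorem concerns functions $u\in N^{1,p}(B)$, which a priori carry no upper gradient outside $B$. To make the chaining argument work for such $u$, one does not telescope over balls centred at $x\in B$ with radii up to $r$; instead one uses a Whitney-type cover of $B$ by balls $B_j$ with $\sigma B_j\subset B$ together with the Boman chain condition (which balls satisfy), so that every application of \eqref{Poincareineq} stays inside $B$ and the final estimate reads $\|u-u_B\|_{L^{p^{\ast}}(B)}\le C\|\nabla_{\min} u\|_{L^p(B)}$. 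The same localisation is needed in your operator $T_\epsilon$: the balls $B_j$ and their dilates must be kept inside $B$ (or inside a slightly shrunken ball, with a separate tail estimate near $\partial B$). This is routine but should be flagged, since as written your inequalities literally involve $u$ and $\nabla_{\up}u$ on a set where they are not defined.
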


For the rest of the paper, we assume that the metric space $(X, \rho, \mu)$ is a doubling metric measure space which supports the weak $p$-Poincar\'e inequality. The examples of such spaces are the Euclidean space $\mathbb R^n$ and the Carnot-Carath\'eodory spaces. 

Let us define the Sobolev $(p,p^{\ast})$-embedding domains: Let $\Omega\subset X = (X, \rho, \mu)$ be a domain. Then $\Omega$ is called a Sobolev $(p,p^{\ast})$-embedding domain, if the embedding operator
$$
    i: N^{1,p}(\Omega) \to L^{q}(\Omega), 1<p<\infty,
 $$
 is bounded for every $1<q\leq p^{\ast}$, and is compact for every $1<q<p^{\ast}$.
 

We remark that Theorem~\ref{embedding_2} give examples of Sobolev $(p,p^{\ast})$ embedding domains on metric measure spaces. The other approach  to the Sobolev $(p,p^{\ast})$-embedding domains is connected with the composition operators theory \cite{MU24}.

\section{Variational eigenvalues on metric measure spaces}

Let $\Omega\subset X = (X, \rho, \mu)$ be a domain. We define the first non-trivial variational Neumann $(p,q)$-eigenvalue, $1<q,p<\infty$, as 
$$
\lambda_{p,q}(\Omega)\\=\inf \left\{\frac{\|\nabla_{min} u\|_{L^p(\Omega)}^p}{\|u\|_{L^q(\Omega)}^p} :
u \in N^{1,p}(\Omega) \setminus \{0\}, \int_{\Omega} |u|^{q-2}u~d\mu(x)=0 \right\}.
$$

On the first step we prove the following auxiliary lemma.

\begin{lem}\label{lem1}
Let $\Omega\subset X = (X, \rho, \mu)$ be a Sobolev $(p,p^{\ast})$-embedding domain, $1<p<\infty$. Let $1<q<p^{\ast}$ and suppose that $v\in N^{1,p}(X)\setminus\{0\}$ be such that $\int_{\Omega_{\gamma}}|v|^{q-2}v\,d\mu(x)=0$. Then there exists a constant $C=C(\Omega)>0$ such that
$$
\|v\|_{L^p(\Omega)}\leq C\|\nabla_{min} v\|_{L^p(\Omega)}.
$$
\end{lem}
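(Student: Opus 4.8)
The plan is to argue by contradiction, exploiting the compactness furnished by the hypothesis that $\Omega$ is a Sobolev $(p,p^{\ast})$-embedding domain. Suppose the asserted inequality fails for every constant. Then there is a sequence $v_n\in N^{1,p}(X)\setminus\{0\}$ satisfying the constraint $\int_{\Omega}|v_n|^{q-2}v_n\,d\mu(x)=0$ and $\|v_n\|_{L^p(\Omega)}>n\,\|\nabla_{min} v_n\|_{L^p(\Omega)}$. Normalizing $u_n:=v_n/\|v_n\|_{L^p(\Omega)}$, I obtain $\|u_n\|_{L^p(\Omega)}=1$ and $\|\nabla_{min} u_n\|_{L^p(\Omega)}<1/n\to 0$; since $t\mapsto |t|^{q-2}t$ is positively homogeneous of degree $q-1$, the constraint is preserved, so $\int_{\Omega}|u_n|^{q-2}u_n\,d\mu(x)=0$. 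In particular $\{u_n\}$ is bounded in $N^{1,p}(\Omega)$.

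Next I would extract good limits. Because $1<p<p^{\ast}$ and $1<q<p^{\ast}$, the embedding-domain hypothesis makes both $N^{1,p}(\Omega)\hookrightarrow L^p(\Omega)$ and $N^{1,p}(\Omega)\hookrightarrow L^q(\Omega)$ compact. Passing to a subsequence (not relabeled), $u_n\to u$ strongly in $L^p(\Omega)$; extracting a further subsequence, $u_n\to u$ strongly in $L^q(\Omega)$ as well, the two limits agreeing by uniqueness of the a.e.\ limit. The strong $L^p$ convergence gives $\|u\|_{L^p(\Omega)}=1$, so $u\neq 0$.

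Then I would identify $u$ as a nonzero constant and eliminate it using the constraint. By the lower semicontinuity of the minimal $p$-weak upper gradient with respect to $L^p$-convergence, together with $\|\nabla_{min} u_n\|_{L^p(\Omega)}\to 0$, one gets $\|\nabla_{min} u\|_{L^p(\Omega)}=0$; since $\Omega$ is connected and supports the weak $p$-Poincar\'e inequality, a function with vanishing upper gradient is constant, so $u\equiv c$ with $c\neq 0$. To pass to the limit in the nonlinear constraint I invoke Theorem~\ref{Brezisthm}: from $u_n\to u$ in $L^q(\Omega)$ there is a subsequence with $u_n\to u$ a.e.\ and $|u_n|\leq g$ for some $g\in L^q(\Omega)$. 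Then $|u_n|^{q-2}u_n\to |u|^{q-2}u$ a.e.\ with $\bigl||u_n|^{q-2}u_n\bigr|\leq g^{q-1}$, and since $\mu(\Omega)<\infty$ one has $g^{q-1}\in L^{q/(q-1)}(\Omega)\subset L^1(\Omega)$; dominated convergence yields $\int_{\Omega}|u|^{q-2}u\,d\mu(x)=\lim_n\int_{\Omega}|u_n|^{q-2}u_n\,d\mu(x)=0$. As $u\equiv c$ and $0<\mu(\Omega)<\infty$, this forces $|c|^{q-2}c\,\mu(\Omega)=0$, i.e.\ $c=0$, contradicting $u\neq 0$.

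The main obstacle is the third step: passing to the limit in the nonlinear zero-average constraint, which is exactly where the compact $L^q$-embedding and the domination supplied by Theorem~\ref{Brezisthm} are indispensable. The remaining ingredients --- boundedness in $N^{1,p}(\Omega)$, lower semicontinuity of the gradient, and the fact that a function with zero upper gradient on a connected Poincar\'e domain is constant --- are standard once the correct strong convergences in $L^p$ and $L^q$ have been secured.
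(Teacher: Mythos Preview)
Your proof is correct and follows essentially the same contradiction--normalization--compactness argument as the paper: normalize to unit $L^p$-norm, use boundedness in $N^{1,p}(\Omega)$ and the compact embedding to extract a limit, show the limit has zero upper gradient hence is constant, and pass the constraint to the limit via Theorem~\ref{Brezisthm} and dominated convergence to force the constant to be zero. The only cosmetic differences are that the paper phrases the gradient step through weak convergence $\nabla_{min}v_n\rightharpoonup \nabla_{min}v$ in $L^p$ (and uniqueness of weak limits) rather than lower semicontinuity under strong $L^p$-convergence, and it derives the contradiction directly from $v=0$ against $\|v_n\|_{L^p(\Omega)}=1$ without first isolating $\|u\|_{L^p(\Omega)}=1$.
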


\begin{proof}
By contradiction, suppose for every $n\in\mathbb{N}$, there exists $v_n\in N^{1,p}(\Omega)\setminus\{0\}$ such that $\int_{\Omega}|v_n|^{q-2}v_n\,d\mu(x)=0$ and
\begin{equation}\label{cn}
\|v_n\|_{L^p(\Omega\
)}>n\|\nabla v_n\|_{L^p(\Omega)}.
\end{equation}
Without loss of generality, let us assume that $\|v_n\|_{L^p(\Omega)}=1$. If not, we define
$$
u_n=\frac{v_n}{\|v_n\|_{L^p(\Omega)}},
$$
then $\|u_n\|_{L^p(\Omega)}=1$ and \eqref{cn} holds for $u_n$ and also $\int_{\Omega}|u_n|^{q-2}u_n\,d\mu(x)=0$. By \eqref{cn}, since $\|\nabla_{min} v_n\|_{L^p(\Omega)}\to 0$ as $n\to\infty$, we have $\{v_n\}_{n\in\mathbb{N}}$ is uniformly bounded in $N^{1,p}(\Omega)$.

Since $\Omega$ is a $(p,p^{\ast})$-Sobolev embedding domain, there exists $v\in N^{1,p}(\Omega)$ such that
$$
v_n{\rightharpoonup} v\text{ weakly\,in }N^{1,p}(\Omega),\quad v_n\to v\text{ strongly\,in }L^q(\Omega),\quad\forall \,1<q<p^{*}
$$
and
$$
\nabla_{min} v_n{\rightharpoonup}\nabla_{min} v\text{ weakly\,in } L^p(\Omega).
$$
Hence, by Theorem \ref{Brezisthm}, there exists a function $g\in L^q(\Omega)$ such that
$$
|v_n|\leq g\text{ a.e.\, in }\Omega.
$$
Since $\|\nabla_{min} v_n\|_{L^p(\Omega)}\to 0$ as $n\to\infty$, we have $\nabla_{min} v_n{\rightharpoonup} 0$ weakly in $L^p(\Omega)$, hence $\nabla v=0$ a.e. in $\Omega$, which gives that $v=\text{constant}$ a.e. in $\Omega$. This combined with the fact that
$$
0=\lim_{n\to\infty}\int_{\Omega}|v_n|^{q-2}v_n\,d\mu(x)=\int_{\Omega}|v|^{q-2}v\,d\mu(x)
$$
gives that $v=0$ a.e. in $\Omega$. This contradicts the hypothesis that $\|v_n\|_{L^p(\Omega)}=1$. This completes the proof.
\end{proof}

Now we can prove the Min-Max Principle and existence of minimizers.

\begin{thm}\label{minthm}
Let $\Omega\subset X = (X, \rho, \mu)$ be a Sobolev $(p,p^{\ast})$-embedding domain, $1<p<\infty$. Then for $1<q<p^{\ast}$, there exists $u\in N^{1,p}(\Omega)\setminus\{0\}$ such that $\int_{\Omega}|u|^{q-2}u\,d\mu(x)=0$. Moreover,
\begin{multline*}
\lambda_{p,q}(\Omega)=\inf \left\{\frac{\|\nabla_{min} u\|_{L^p(\Omega)}^p}{\|u\|_{L^q(\Omega)}^p} : u \in N^{1,p}(\Omega) \setminus \{0\},
\int_{\Omega} |u|^{q-2}u~d\mu(x)=0 \right\}\\
=\frac{\|\nabla_{min} u\|_{L^p(\Omega)}^p}{\|u\|_{L^q(\Omega)}^p}.
\end{multline*}
Further, $\lambda_{p,q}(\Omega)>0.$
\end{thm}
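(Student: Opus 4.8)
The plan is to establish existence of a minimizer via the direct method and then derive positivity of $\lambda_{p,q}(\Omega)$ as a consequence of Lemma~\ref{lem1}. First I would show the admissible set is nonempty: since $\Omega$ has positive measure, I can choose two disjoint balls $B_1,B_2\subset\Omega$ and build a function $u$ that takes suitable values on each, scaling the contributions so that the constraint $\int_\Omega|u|^{q-2}u\,d\mu=0$ is satisfied (this is a codimension-one condition, easily met by a one-parameter family). Such $u$ lies in $N^{1,p}(\Omega)\setminus\{0\}$, so the infimum is taken over a nonempty set.

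Next I would run the direct method on a minimizing sequence $\{u_n\}$. By homogeneity I normalize $\|u_n\|_{L^q(\Omega)}=1$, so that $\|\nabla_{min}u_n\|_{L^p(\Omega)}^p\to\lambda_{p,q}(\Omega)$. The key point is that the sequence is bounded in $N^{1,p}(\Omega)$: the gradient norms are bounded by construction, and the $L^p$-norms are controlled through the embedding together with an application of Lemma~\ref{lem1}, which gives $\|u_n\|_{L^p(\Omega)}\le C\|\nabla_{min}u_n\|_{L^p(\Omega)}$ since each $u_n$ satisfies the zero-average constraint. By reflexivity of $N^{1,p}(\Omega)$ and the compact embedding $N^{1,p}(\Omega)\hookrightarrow L^q(\Omega)$ for $q<p^{\ast}$, I pass to a subsequence with $u_n\rightharpoonup u$ weakly in $N^{1,p}(\Omega)$, $\nabla_{min}u_n\rightharpoonup\nabla_{min}u$ weakly in $L^p(\Omega)$, and $u_n\to u$ strongly in $L^q(\Omega)$. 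Strong $L^q$-convergence gives $\|u\|_{L^q(\Omega)}=1$, so $u\ne 0$, and it also passes the constraint to the limit, exactly as in the proof of Lemma~\ref{lem1}, so $\int_\Omega|u|^{q-2}u\,d\mu=0$.

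I then verify that $u$ is a minimizer. Weak lower semicontinuity of the $L^p$-norm under weak convergence gives
$$
\|\nabla_{min}u\|_{L^p(\Omega)}^p\le\liminf_{n\to\infty}\|\nabla_{min}u_n\|_{L^p(\Omega)}^p=\lambda_{p,q}(\Omega).
$$
Since $u$ is admissible and $\|u\|_{L^q(\Omega)}=1$, the quotient $\|\nabla_{min}u\|_{L^p(\Omega)}^p/\|u\|_{L^q(\Omega)}^p$ is at least $\lambda_{p,q}(\Omega)$ by definition of the infimum; combined with the reverse inequality above, equality holds and $u$ realizes the infimum. Finally, positivity of $\lambda_{p,q}(\Omega)$ follows directly: applying Lemma~\ref{lem1} to the minimizer $u$ yields $1=\|u\|_{L^q(\Omega)}\le C'\|u\|_{L^p(\Omega)}\le C'C\|\nabla_{min}u\|_{L^p(\Omega)}$ (using the bounded embedding $L^p\hookleftarrow$ via Lemma~\ref{lem1} and the embedding into $L^q$), so $\|\nabla_{min}u\|_{L^p(\Omega)}$ is bounded below by a positive constant, whence $\lambda_{p,q}(\Omega)>0$.

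The main obstacle I anticipate is the lower semicontinuity of the gradient term: one must know that weak convergence $u_n\rightharpoonup u$ in $N^{1,p}(\Omega)$ forces $\nabla_{min}u_n\rightharpoonup\nabla_{min}u$ in $L^p(\Omega)$, i.e. that the weak limit of the minimal upper gradients is the minimal upper gradient of the limit. On metric measure spaces this is not automatic from the pointwise definition and relies on the Mazur/Fuglede machinery underlying the uniqueness of $\nabla_{min}$; since the excerpt already invokes precisely this convergence in the proof of Lemma~\ref{lem1}, I would cite it in the same form rather than reprove it, and the rest of the argument is then the standard direct method.
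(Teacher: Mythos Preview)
Your proposal is correct and follows essentially the same direct-method approach as the paper: Lemma~\ref{lem1} for boundedness, compact embedding for strong $L^q$-convergence, and weak lower semicontinuity for the gradient term; the only cosmetic difference is that the paper normalizes $\|\nabla_{min}u_n\|_{L^p(\Omega)}=1$ rather than $\|u_n\|_{L^q(\Omega)}=1$, and deduces positivity from the resulting inequality $\lambda_{p,q}\|u\|_{L^q}^p\ge 1$. One minor slip in your positivity argument: the step $\|u\|_{L^q(\Omega)}\le C'\|u\|_{L^p(\Omega)}$ fails when $q>p$; route it instead through the embedding $\|u\|_{L^q}\le C'\|u\|_{N^{1,p}}$ combined with Lemma~\ref{lem1}, or simply note that $\nabla_{min}u=0$ would make $u$ constant and hence zero by the constraint, contradicting $\|u\|_{L^q}=1$.
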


\begin{proof}
Let $n\in\mathbb{N}$ and define the functionals $G: N^{1,p}(\Omega)\to\mathbb{R}$ by
$$
G(v)=\int_{\Omega}|v|^{q-2}v\,d\mu(x),
$$
and $H_\frac{1}{n}: N^{1,p}(X)\to\mathbb{R}$ by
$$
H_\frac{1}{n}(v)=\|\nabla_{min} v\|_{L^p(\Omega)}^p-(\lambda_{p,q}+\frac{1}{n})\|v\|_{L^q(\Omega)}^p,
$$
where we denoted $\lambda_{p,q}(\Omega)$ by $\lambda_{p,q}$.
By the definition of infimum, for every $n\in\mathbb{N}$, there exists a function $u_n\in N^{1,p}(\Omega)\setminus\{0\}$ such that
$$
\int_{\Omega}|u_n|^{q-2}u_n\,d\mu(x)=0\,\,\text{ and}\,\, H_\frac{1}{n}(u_n)<0.
$$
Without loss of generality, let us assume that $\|\nabla_{min} u_n\|_{L^p(\Omega)}=1$. By Lemma \ref{lem1}, the sequence $\{u_n\}_{n\in\mathbb{N}}$ is uniformly bounded in $N^{1,p}(\Omega)$.
Since $\Omega$ is a Sobolev $(p,p^\ast)$-embedding domain, the embedding operator
$$
N^{1,p}(\Omega)\hookrightarrow L^q(\Omega),\quad 1<q<p^{*},
$$
is compact, hence there exists $u\in N^{1,p}(\Omega)$ such that $u_n{\rightharpoonup} u$ weakly in $N^{1,p}(\Omega)$, $u_n\to u$ strongly in $L^q(\Omega)$ and $\nabla_{min} u_n{\rightharpoonup} \nabla_{min} u$ weakly in $L^p(\Omega)$.

Hence, by Theorem \ref{Brezisthm}, there exists a function $g\in L^q(X)$ such that
$$
|u_n|\leq g\,\,\text{ a.e. in}\,\,\Omega.
$$
Since $|u_n|\leq g$ a.e. in $\Omega$ and $u_n\to u$ a.e. in $\Omega$, then
$$
||u_n|^{q-2}u_n|\leq |u_n|^{q-1}\leq |g|^{q-1}\in L^{q'}(\Omega).
$$
So, by the Lebesgue Dominated Convergence Theorem (see, for example, \cite{Fe69}), it follows that
$$
0=\lim_{n\to\infty}\int_{\Omega}|u_n|^{q-2}u_n\,d\mu(x)=\int_{\Omega}|u|^{q-2}u\,d\mu(x).
$$
So,
$$
\int_{\Omega}|u|^{q-2}u\,d\mu(x)=0.
$$
Due to $H_\frac{1}{n}(u_n)<0$, we obtain
\begin{equation}\label{1}
\|\nabla_{min} u_n\|_{L^p(\Omega)}^p-(\lambda_{p,q}+\frac{1}{n})\|u_n\|_{L^q(\Omega)}^p<0.
\end{equation}
Since $\nabla_{min} u_n{\rightharpoonup} \nabla_{min} u$ weakly in $L^p(\Omega)$, by the weak lower semicontinuity of norm, we have
$$
\|\nabla_{min} u\|^p_{L^p(\Omega)}\leq \lim\inf_{n\to\infty}\|\nabla_{min} u_n\|^p_{L^p(\Omega)}=1.
$$
So, by passing to the limit in \eqref{1}, we get
$$
\lambda_{p,q}\geq \frac{\|\nabla u\|^p_{L^p(\Omega)}}{\|u\|^p_{L^q(\Omega)}}.
$$
Therefore, by the definition of $\lambda_{p,q}$, we obtain
$$
\lambda_{p,q} = \frac{\|\nabla u\|^p_{L^p(\Omega)}}{\|u\|^p_{L^q(\Omega)}}.
$$
Now, since $H_{\frac{1}{n}}(u_n)<0$ and $\|\nabla_{min} u_n\|^p_{L^p(\Omega)}=1$, we have
$$
1-(\lambda_{p,q}+\frac{1}{n})\|u_n\|_{L^q(\Omega)}^p<0.
$$
Letting $n\to\infty$, we get
$$
\|u\|^p_{L^q(\Omega)}\lambda_{p,q}\geq 1,
$$
which gives $\lambda_{p,q}>0$ and $u\neq 0$ a.e. in $\Omega$.
\end{proof}

We denote by $D(f)$ the class of all non-negative measurable functions $g$ in $\Omega$ such that
$$
|f(x)-f(y)|\leq\rho(x,y)(g(x)+g(y))\,\,\text{$\mu$-a.e. in}\,\,\Omega.
$$
The elements of $D(f)$ are called as generalized gradients of $f$.

The Hajlasz-Sobolev space $M^{1,p}(\Omega)$, $1<p<\infty$, consists of all $f\in L^p(\Omega)$ such that $D(f)\cap L^p(\Omega)\ne \emptyset$. The norm of $f\in M^{1,p}(\Omega)$ is defined by 
    $$
        \|f\|_{M^{(1,p)}(\Omega)} = \|f\|_{L^p(\Omega)} + \inf\limits_{g\in D(f)\cap L^p(\Omega)}\|g\|_{L^p(\Omega)},
    $$
Recall the following result \cite{HKT}:
\begin{thm}
\label{HKT}
Let $\Omega\subset X = (X, \rho, \mu)$ be a doubling domain  which supports the weak $p$-Poincar\'e inequality. Then $M^{1,p}(\Omega)=N^{1,p}(\Omega)$.
\end{thm}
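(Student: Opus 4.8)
The plan is to prove the set equality by establishing the two inclusions $M^{1,p}(\Omega)\subseteq N^{1,p}(\Omega)$ and $N^{1,p}(\Omega)\subseteq M^{1,p}(\Omega)$ separately, showing along the way that the two norms are comparable. The first inclusion uses only the doubling structure, while the second is where the weak $p$-Poincar\'e inequality \eqref{Poincareineq} enters essentially; the identification then holds as normed spaces up to equivalence of norms.

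For $M^{1,p}(\Omega)\subseteq N^{1,p}(\Omega)$ I would start from $f\in M^{1,p}(\Omega)$ with a generalized gradient $g\in D(f)\cap L^p(\Omega)$, so that $|f(x)-f(y)|\le\rho(x,y)(g(x)+g(y))$ for $\mu$-a.e.\ $x,y$, and show that a fixed constant multiple of $g$ is a $p$-weak upper gradient of $f$; this yields $f\in N^{1,p}(\Omega)$ together with $\|\nabla_{min} f\|_{L^p(\Omega)}\le C\|g\|_{L^p(\Omega)}$. The delicate point is to pass from the pointwise inequality, which holds only outside a $\mu$-null set, to an inequality valid along $p$-a.e.\ rectifiable curve. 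I would fix a good representative of $f$ at density points (using the Hardy--Littlewood maximal function to control oscillations), estimate $|f(\gamma(a))-f(\gamma(b))|$ by telescoping the oscillation over a chain of points along $\gamma$, and dominate the sum by $\int_\gamma Cg\,ds$. The curves that meet the exceptional null set in a set of positive length, or that pass through non-density points, form a family of zero $p$-modulus, which is exactly what a $p$-weak upper gradient tolerates. No Poincar\'e inequality is needed here.

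For the reverse inclusion $N^{1,p}(\Omega)\subseteq M^{1,p}(\Omega)$ I would take $f\in N^{1,p}(\Omega)$ with minimal $p$-weak upper gradient $g:=\nabla_{min} f\in L^p(\Omega)$ and produce a Hajlasz gradient in $L^p(\Omega)$. The route is a telescoping estimate within $\Omega$: at two Lebesgue points $x,y$ one writes $f(x)-f(y)$ as a telescoping sum of averages $f_{B_i}$ over a chain of balls $B_i\subset\Omega$ of comparable dyadic radii shrinking to $x$ and to $y$, bounds each $|f_{B_i}-f_{B_{i+1}}|$ by the $(1,p)$-Poincar\'e inequality \eqref{Poincareineq} together with the doubling property, and sums the resulting geometric series. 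This gives the pointwise bound
\[
|f(x)-f(y)|\le C\,\rho(x,y)\Big(\big(\mathcal{M}(g^p)(x)\big)^{1/p}+\big(\mathcal{M}(g^p)(y)\big)^{1/p}\Big)
\]
for $\mu$-a.e.\ $x,y$, where $\mathcal{M}$ is the (restricted) Hardy--Littlewood maximal operator, so that $h:=C(\mathcal{M}(g^p))^{1/p}$ is the natural candidate element of $D(f)$.

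The main obstacle is the $L^p$-integrability of this candidate: since $g^p\in L^1(\Omega)$ only, the maximal function $\mathcal{M}(g^p)$ need not be integrable, so $h$ need not belong to $L^p(\Omega)$ and the naive estimate fails. The remedy, which is the technical heart of the argument, is a self-improvement of the Poincar\'e inequality: by the Keith--Zhong theorem, a complete doubling space supporting a weak $p$-Poincar\'e inequality also supports a weak $s$-Poincar\'e inequality for some $s<p$. Rerunning the telescoping argument with exponent $s$ produces $|f(x)-f(y)|\le C\rho(x,y)\big((\mathcal{M}(g^s)(x))^{1/s}+(\mathcal{M}(g^s)(y))^{1/s}\big)$, and now $g^s\in L^{p/s}(\Omega)$ with $p/s>1$, so the $L^{p/s}$-boundedness of $\mathcal{M}$ (here $p>1$ is used) gives $(\mathcal{M}(g^s))^{1/s}\in L^p(\Omega)$ with norm controlled by $\|g\|_{L^p(\Omega)}$. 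Hence $f\in M^{1,p}(\Omega)$ with $\|f\|_{M^{1,p}(\Omega)}\le C\|f\|_{N^{1,p}(\Omega)}$, completing the equivalence. I expect the verification of the zero $p$-modulus of the exceptional curve family in the first inclusion and the bookkeeping in the self-improvement step of the second inclusion to be the two places demanding the most care.
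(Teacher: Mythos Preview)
The paper does not supply a proof of this theorem: it is simply quoted from \cite{HKT} (Haj\l asz--Koskela--Tuominen) as a known result, so there is no in-paper argument to compare your proposal against.

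Your outline follows the standard route and is essentially correct. One point deserves care: the Keith--Zhong self-improvement you invoke for the inclusion $N^{1,p}(\Omega)\subseteq M^{1,p}(\Omega)$ requires the underlying metric space to be \emph{complete}, and a domain $\Omega\subset X$ generally is not. The arguments in \cite{HK00,Sh00} that predate Keith--Zhong avoid this hypothesis by a different device: the telescoping estimate at exponent $p$ already gives the Haj\l asz inequality with gradient $(\mathcal{M}(g^p))^{1/p}$, which lies only in weak-$L^p$, and one then shows directly (via a level-set/truncation argument valid on any doubling space) that a weak-$L^p$ Haj\l asz gradient suffices to place $f$ in $M^{1,p}(\Omega)$ with the correct norm bound. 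Either fix---applying self-improvement on the completion, or replacing it with the weak-type upgrade---closes your argument.
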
 

Let $\Omega\subset X = (X, \rho, \mu)$ be a domain. The $\Omega$ satisfies the measure density condition \cite{HKT}, if there exists a constant $c_0>0$ such that 
$$
\mu(\Omega\cap B(x,r))\geq c_0 \mu(B(x,r))
$$
for all $x\in\overline{\Omega}$ and for all $0<r\leq 1$.

By using Theorem~\ref{HKT} we have the following extension theorem \cite{HKT}:
\begin{thm}
\label{ext}
Let $\Omega\subset X = (X, \rho, \mu)$ be a doubling domain  which supports the weak $p$-Poincar\'e inequality. Suppose that $\Omega$ satisfies the measure density condition, Then the extension operator
$$
E: N^{1,p}(\Omega)\to N^{1,p}(X), \,\,1<p<\infty,
$$
is bounded.
\end{thm}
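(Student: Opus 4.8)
The plan is to transfer the problem from the Newtonian to the Hajlasz--Sobolev scale, where an extension can be written down by an explicit Whitney-type formula. By Theorem~\ref{HKT}, applied both to $\Omega$ and to $X$ itself (taking the domain to be $X$ is admissible, since $X$ is doubling and supports the weak $p$-Poincar\'e inequality), one has $N^{1,p}(\Omega)=M^{1,p}(\Omega)$ and $N^{1,p}(X)=M^{1,p}(X)$ with comparable norms. Hence it suffices to produce a bounded extension operator $E\colon M^{1,p}(\Omega)\to M^{1,p}(X)$; the weak Poincar\'e hypothesis enters \emph{only} through these two identifications, whereas the construction itself will use nothing beyond the doubling property of $\mu$ and the measure density condition.

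For the construction I would fix $f\in M^{1,p}(\Omega)$ together with a generalized gradient $g\in D(f)\cap L^p(\Omega)$ with $\|g\|_{L^p(\Omega)}\lesssim\|f\|_{M^{1,p}(\Omega)}$, and extend $g$ by zero to $\bar g\in L^p(X)$. Since $X\setminus\overline{\Omega}$ is open, choose a Whitney covering $\{B_i\}$ of it by balls $B_i=B(x_i,r_i)$ with $r_i\simeq\dist(x_i,\Omega)$ and bounded overlap, together with a Lipschitz partition of unity $\{\varphi_i\}$ subordinate to a fixed dilation $\{\sigma B_i\}$. To each $B_i$ associate a ball $\widehat B_i=B(z_i,Cr_i)$ centered at a nearest point $z_i\in\overline{\Omega}$; the measure density condition guarantees $\mu(\widehat B_i\cap\Omega)\geq c_0\,\mu(\widehat B_i)$, so the average $f_i=\frac{1}{\mu(\widehat B_i\cap\Omega)}\int_{\widehat B_i\cap\Omega}f\,d\mu$ is well defined and controlled. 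I would then set
\[
Ef(x)=\begin{cases} f(x), & x\in\Omega,\\ \sum_i \varphi_i(x)\,f_i, & x\in X\setminus\overline{\Omega}, \end{cases}
\]
noting that bounded overlap makes the sum locally finite with $\sum_i\varphi_i\equiv 1$ on $X\setminus\overline{\Omega}$.

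The verification that $Ef\in M^{1,p}(X)$ is the technical heart. I would exhibit an explicit generalized gradient $G\in D(Ef)\cap L^p(X)$ of the form
\[
G=C\big(\bar g+M(\bar g)\,\chi_{X\setminus\Omega}\big),
\]
where $M$ is the Hardy--Littlewood maximal operator on the doubling space $X$. Its $L^p$ bound is immediate from the maximal function theorem (this is exactly where $1<p<\infty$ is used) together with the measure density condition, which allows one to dominate averages of $g$ over $\widehat B_i\cap\Omega$ by $c_0^{-1}M(\bar g)$ evaluated at nearby points; the same device, applied to $M(|f|\chi_\Omega)$, yields $\|Ef\|_{L^p(X)}\lesssim\|f\|_{M^{1,p}(\Omega)}$. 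The pointwise inequality $|Ef(x)-Ef(y)|\leq\rho(x,y)(G(x)+G(y))$ I would check in three regimes: for $x,y\in\Omega$ it is the defining inequality for $g$; for $x,y\in X\setminus\overline{\Omega}$ one uses the Lipschitz bounds $|\varphi_i(x)-\varphi_i(y)|\lesssim\rho(x,y)/r_i$, the bounded overlap, and telescoping of the averages $f_i$ via the Hajlasz maximal inequality $|f(u)-f_B|\lesssim\diam(B)\,M(\bar g)(u)$; and the mixed case $x\in\Omega$, $y\notin\overline{\Omega}$ is obtained by interpolating these two through a boundary ball meeting $\Omega$ in comparable measure.

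The main obstacle I anticipate is precisely this case analysis across $\partial\Omega$: controlling the oscillation of the piecewise-defined $Ef$ by a single maximal function of $g$. This is where the measure density condition is indispensable---without the lower bound $\mu(\Omega\cap B)\gtrsim\mu(B)$ the boundary averages $f_i$ could not be compared to maximal averages of $f$ over full balls, and the doubling geometry of the Whitney decomposition would fail to transfer quantitative control from $\Omega$ to $X$. Once the pointwise inequality and the $L^p$ bound on $G$ are in hand, the boundedness of $E\colon M^{1,p}(\Omega)\to M^{1,p}(X)$ follows, and Theorem~\ref{HKT} returns the stated conclusion in the Newtonian scale.
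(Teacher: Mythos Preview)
Your proposal is correct and follows exactly the route the paper indicates: the paper does not give an independent proof of this theorem but simply records it as a consequence of \cite{HKT} together with the identification $N^{1,p}=M^{1,p}$ from Theorem~\ref{HKT}. Your sketch---pass to the Haj\l asz scale via Theorem~\ref{HKT}, then build a Whitney-type extension using the measure density condition and the maximal function bound---is precisely the argument of \cite{HKT} that the paper is invoking, so there is nothing to add.
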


Hence, by combination of Theorem~\ref{embedding_1} and Theorem~\ref{ext} we obtain:

\begin{thm}
\label{embedding_2}
Let $\Omega\subset X = (X, \rho, \mu)$ be a doubling domain  which supports the weak $p$-Poincar\'e inequality. Suppose that $\Omega$ satisfies the measure density condition. Then the embedding operator
$$
    i: N^{1,p}(\Omega) \to L^{q}(\Omega), 1<p<\nu,
 $$
 is bounded for $1<q\leq p^{\ast}={\nu p}/{(\nu-p)}$, and is compact for $1<q<p^{\ast}$.
\end{thm}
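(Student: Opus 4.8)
The plan is to realize the embedding $i\colon N^{1,p}(\Omega)\to L^q(\Omega)$ as a composition of maps whose individual mapping properties are already at our disposal: the bounded extension operator of Theorem~\ref{ext}, the embedding on a ball of Theorem~\ref{embedding_1}, and two harmless restrictions. Since $\Omega$ is a bounded domain, I would fix a ball $B\subset X$ with $\Omega\subset B$ and consider the factorization
$$
N^{1,p}(\Omega)\xrightarrow{\ E\ }N^{1,p}(X)\xrightarrow{\ r\ }N^{1,p}(B)\xrightarrow{\ i_B\ }L^q(B)\xrightarrow{\ R\ }L^q(\Omega),
$$
where $E$ is the extension operator, $r$ is restriction of a Newtonian function from $X$ to $B$, $i_B$ is the embedding of Theorem~\ref{embedding_1}, and $R$ is restriction of an $L^q$ function from $B$ to $\Omega$. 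Because $E$ is an \emph{extension}, one has $(Eu)|_\Omega=u$, so the composition $R\circ i_B\circ r\circ E$ coincides with $i$, and it suffices to control each factor.

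First I would verify that $r$ is bounded. This rests on the locality of the minimal $p$-weak upper gradient: for $f\in N^{1,p}(X)$ one has $\nabla_{min}(f|_B)=(\nabla_{min}f)|_B$ $\mu$-almost everywhere, whence
$$
\|f|_B\|_{N^{1,p}(B)}=\|f\|_{L^p(B)}+\|\nabla_{min}f\|_{L^p(B)}\leq\|f\|_{N^{1,p}(X)}.
$$
The map $R$ is trivially bounded, with $\|Rf\|_{L^q(\Omega)}\leq\|f\|_{L^q(B)}$, since $\Omega\subset B$. By Theorem~\ref{ext} the operator $E$ is bounded, and by Theorem~\ref{embedding_1} the operator $i_B$ is bounded for $1<q\leq p^{\ast}$ under the hypothesis $1<p<\nu$. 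Composing bounded operators then shows that $i=R\circ i_B\circ r\circ E$ is bounded for the full range $1<q\leq p^{\ast}$.

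For the compactness assertion I would invoke that $i_B$ is compact for $1<q<p^{\ast}$, again by Theorem~\ref{embedding_1}, together with the standard fact that a composition of bounded operators in which at least one factor is compact is itself compact. Writing $i=R\circ i_B\circ(r\circ E)$ with the compact factor $i_B$ in the middle yields compactness of $i$ for $1<q<p^{\ast}$.

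The genuinely analytic content has already been absorbed into Theorems~\ref{ext} and~\ref{embedding_1}, so the present argument is essentially bookkeeping. The one point I expect to need care is the boundedness of the restriction $r$, that is, the locality identity $\nabla_{min}(f|_B)=(\nabla_{min}f)|_B$, together with the implicit use of boundedness of $\Omega$ in order to enclose it in a single ball $B$ to which Theorem~\ref{embedding_1} applies; everything else follows from the elementary stability of boundedness and compactness under composition.
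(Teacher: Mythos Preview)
Your proposal is correct and follows exactly the route the paper indicates: the paper merely states that Theorem~\ref{embedding_2} is obtained ``by combination of Theorem~\ref{embedding_1} and Theorem~\ref{ext}'', and your factorization $i=R\circ i_B\circ r\circ E$ through a ball $B\supset\Omega$ is precisely how one makes that combination rigorous. The only details you add beyond the paper's one-line justification are the locality of the minimal upper gradient (for boundedness of $r$) and the implicit use of boundedness of $\Omega$, both of which are indeed needed and routine.
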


Hence, if $\Omega\subset X = (X, \rho, \mu)$ be a doubling domain  which supports the weak $p$-Poincar\'e inequality, $1<p<\nu$ and supporting the measure density condition, then $\Omega$ is a Sobolev $(p,p^\ast)$-domain. Hence, by Theorem \ref{embedding_2}, we have:

\begin{thm}\label{minthm2}
Let $\Omega\subset X = (X, \rho, \mu)$ be a doubling domain  which supports the weak $p$-Poincar\'e inequality, $1<p<\nu$. Suppose that $\Omega$ satisfies the measure density condition.
Then for $1<q<p^{\ast}$, there exists $u\in N^{1,p}(\Omega)\setminus\{0\}$ such that $\int_{\Omega}|u|^{q-2}u\,dx=0$. Moreover,
\begin{multline*}
\lambda_{p,q}(\Omega)=\inf \left\{\frac{\|\nabla_{min} u\|_{L^p(\Omega)}^p}{\|u\|_{L^q(\Omega)}^p} : u \in N^{1,p}(\Omega) \setminus \{0\},
\int_{\Omega} |u|^{q-2}u~d\mu(x)=0 \right\}\\
=\frac{\|\nabla_{min} u\|_{L^p(\Omega)}^p}{\|u\|_{L^q(\Omega)}^p}.
\end{multline*}
Further, $\lambda_{p,q}(\Omega)>0.$
\end{thm}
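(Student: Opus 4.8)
The plan is to reduce Theorem~\ref{minthm2} to the already-established Theorem~\ref{minthm}, whose sole hypothesis is that $\Omega$ be a Sobolev $(p,p^{\ast})$-embedding domain. Thus the entire content of the argument is to verify that the present geometric and measure-theoretic assumptions on $\Omega$ force it to be such a domain; once this is done, the existence of a minimizer, the Min-Max identity, and the strict positivity $\lambda_{p,q}(\Omega)>0$ all follow verbatim from Theorem~\ref{minthm}.

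First I would recall the definition of a Sobolev $(p,p^{\ast})$-embedding domain: it requires precisely that the embedding operator $i\colon N^{1,p}(\Omega)\to L^{q}(\Omega)$ be bounded for every $1<q\leq p^{\ast}$ and compact for every $1<q<p^{\ast}$. Under the present hypotheses---that $\Omega$ is a doubling domain supporting the weak $p$-Poincar\'e inequality, with $1<p<\nu$, and satisfying the measure density condition---Theorem~\ref{embedding_2} asserts exactly this boundedness and compactness. Hence $\Omega$ is a Sobolev $(p,p^{\ast})$-embedding domain, which is the single fact I need.

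With this identification in hand I would simply invoke Theorem~\ref{minthm} with the same $p$ and $q$: for $1<q<p^{\ast}$ it produces a function $u\in N^{1,p}(\Omega)\setminus\{0\}$ satisfying the constraint $\int_{\Omega}|u|^{q-2}u\,d\mu(x)=0$ and realizing the infimum that defines $\lambda_{p,q}(\Omega)$, together with the strict bound $\lambda_{p,q}(\Omega)>0$. This yields all three assertions of the theorem at once.

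The main obstacle has therefore been absorbed into the preceding results rather than appearing here: the substance lies in the chain Theorem~\ref{embedding_1} together with the extension Theorem~\ref{ext} yielding Theorem~\ref{embedding_2}, and in the passage to the limit carried out inside Theorem~\ref{minthm}. The one point I would flag as genuinely load-bearing---though it is handled upstream---is the \emph{compactness} of the embedding for $1<q<p^{\ast}$, since it is exactly this that delivers strong $L^{q}(\Omega)$-convergence of the minimizing sequence and hence allows one to pass to the limit in the nonlinear constraint $\int_{\Omega}|u_n|^{q-2}u_n\,d\mu(x)=0$ and to conclude that the weak limit does not collapse to zero. This is precisely where the strict inequality $q<p^{\ast}$, the condition $1<p<\nu$, and the measure density assumption enter, and where a weakening of the hypotheses would break the argument.
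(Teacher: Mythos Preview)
Your proposal is correct and follows essentially the same route as the paper: the paper states Theorem~\ref{minthm2} as an immediate consequence of Theorem~\ref{embedding_2} (which certifies that $\Omega$ is a Sobolev $(p,p^{\ast})$-embedding domain) together with Theorem~\ref{minthm}. Your write-up makes this reduction explicit and even isolates the role of compactness more carefully than the paper does.
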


\section{Estimates of variational eigenvalues}

Let $\Omega\subset X = (X, \rho, \mu)$ be a Sobolev $(p,p^{\ast})$-embedding domain, for $1<p<\infty$ and $1<q<p^{\ast}=\nu p/(\nu-p)$.
By Theorem \ref{minthm}, the first non-trivial Neumann eigenvalue $\lambda_{p,q}(\Omega)$ can be characterized as
\begin{equation*}
\lambda_{p,q}(\Omega)=\inf \left\{\frac{\|\nabla_{min} u\|_{L^p(\Omega)}^p}{\|u\|_{L^q(\Omega)}^p} : u \in N^{1,p}(\Omega) \setminus \{0\},
\int_{\Omega} |u|^{q-2}u~d\mu(x)=0 \right\}.
\end{equation*}

Furthermore, since $\lambda=0$ is a trivial eigenvalue and $u=c$ are corresponding eigenfunctions, we have
\begin{multline*}
\lambda_{p,q}(\Omega)=\inf\limits_{(u-c) \in N^{1,p}(\Omega)}\sup\limits_{c \in \mathbb R}\frac{\|\nabla_{min} (u-c)\|_{L^p(\Omega)}^p}{\|u-c\|_{L^q(\Omega)}^p}\\
=\inf\limits_{u \in N^{1,p}(\Omega)\setminus \{0\},~ \int_{\Omega} |u|^{q-2}u~d\mu(x)=0}\frac{\|\nabla_{min} u\|_{L^p(\Omega)}^p}{\|u\|_{L^q(\Omega)}^p}.
\end{multline*}
Hence $\lambda_{p,q}(\Omega)^{-\frac{1}{p}}$ is equal to the best constant $C_{p,q}(\Omega)$ in the $(p,q)$-Sobolev-Poincar\'e inequality
\begin{equation}\label{bc1}
\begin{split}
\inf\limits_{c \in \mathbb R}\left(\int\limits_{\Omega} |u(x)-c|^q~d\mu(x)\right)^{\frac{1}{q}}=\left(\int\limits_{\Omega} |u(x)-\widetilde{u}|^q~d\mu(x)\right)^{\frac{1}{q}}
\\
\leq C_{p,q}(\Omega)
\left(\int_{\Omega} |\nabla_{min} u(x)|^p~d\mu(x)\right)^{\frac{1}{p}}, \,\,\, u \in N^{1,p}(\Omega),
\end{split}
\end{equation}
where $\widetilde{u}\in\mathbb R$ is defined by the equality: 
$$
\inf\limits_{c \in \mathbb R}\int_{\Omega} |u(x)-c|^q~d\mu(x)=\int\limits_{\Omega} |u(x)-\widetilde{u}|^q~d\mu(x),
$$
see details in \cite{MU25}.

\vskip 0.2cm

Let us consider extension operators on Sobolev spaces for estimates of variational Neumann eigenvalues. The  extension operators traces back to the classical works of A.~P.~Calderon \cite{C61} and E.~M.~Stein \cite{S70}, which proved that Lipschitz bounded domains are $(p,p)$-extension domains for any $1 \leq p \leq \infty$. P.~Jones \cite{J81} generalized this result to the so-called uniform domains.

The problem of estimates of extension operator norms is crucial for the estimates Neumann eigenvalues of of elliptic operators \cite{GPU20}. In this case, not only the existence of extension operators is needed, but also estimates of their norms. In \cite{T15}, estimates for the norms of extension operators in Lipschitz domains were obtained, following the work of \cite{S70}. In \cite{GU24} estimates for the norms of extension operators were obtained by using the composition operators on Sobolev spaces \cite{VU04,VU05}.

Recall that a bounded domain $\Omega \subset X = (X,\rho, \mu)$ is called a $(p,q)$-extension domain \cite{KUZ22}, for $1 \leq q \leq p \leq \infty$, if there exists a bounded operator
$$
E: S^{1,p}(\Omega) \to S^{1,q}(X), ,\,\ 1 \leq q \leq p \leq \infty,
$$
such that $E(f)\big|_{\Omega} = f$ for any function $f \in S^{1,p}(\Omega)$. This operator is called the bounded $(p,q)$-extension operator. The norm of the bounded $(p,q)$-extension operator is defined as
$$
\|E\| = \sup\limits_{f \in S^{1,p}(\Omega)} \frac{\|E(f)\|_{S^{1,q}(X)}}{\|f\|_{S^{1,p}(\Omega)}}.
$$

\vskip 0.2cm

The following theorem generalizes the result of \cite{GPU20,GU24}, which were given in the Euclidean spaces  $\mathbb R^n$.

\begin{thm} Let $X = (X, \rho, \mu)$  be a doubling metric measure space $X$ supporting the weak $p$-Poincar\'e inequality, $1<p<\nu$. 
Suppose that $\Omega\subset\widetilde{\Omega}\subset X$ are $(p,p)$-extension domains. Then
$$
\lambda_{p,q}(\widetilde{\Omega})\leq \|E_{\Omega,\widetilde{\Omega}}\|^p   \lambda_{p,q}(\Omega),\,\,1<q<p^*=\frac{\nu p}{\nu-p},
$$
where $\|E_{\Omega,\widetilde{\Omega}}\|$ denotes the norm of a bounded extension operator
$$
E_{\Omega,\widetilde{\Omega}}:  S^{1,p}(\Omega)\to S^{1,p}(\widetilde{\Omega}).
$$
\end{thm}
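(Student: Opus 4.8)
The plan is to reduce the eigenvalue comparison to a comparison of the best constants in the Sobolev--Poincaré inequality \eqref{bc1}, and then to transport test functions from $\Omega$ to $\widetilde{\Omega}$ through the extension operator. The paper has already established the identity $\lambda_{p,q}(\Omega)^{-1/p} = C_{p,q}(\Omega)$, where $C_{p,q}(\Omega)$ is the smallest constant for which
\[
\inf_{c \in \mathbb{R}} \Big( \int_\Omega |u - c|^q \, d\mu(x) \Big)^{1/q} \le C_{p,q}(\Omega) \Big( \int_\Omega |\nabla_{min} u|^p \, d\mu(x) \Big)^{1/p}
\]
holds for all $u \in N^{1,p}(\Omega)$, and likewise for $\widetilde{\Omega}$. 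Since both $\Omega$ and $\widetilde{\Omega}$ are $(p,p)$-extension domains and $1 < p < \nu$, extending a function to $X$ and invoking Theorem~\ref{embedding_1} on an enclosing ball of the (bounded) domain shows that each is a Sobolev $(p,p^{\ast})$-embedding domain; hence Theorem~\ref{minthm} applies and the constants $C_{p,q}(\Omega)$, $C_{p,q}(\widetilde{\Omega})$ are finite and positive. Raising to the power $p$ and taking reciprocals, the target inequality $\lambda_{p,q}(\widetilde{\Omega}) \le \|E_{\Omega,\widetilde{\Omega}}\|^p \lambda_{p,q}(\Omega)$ is equivalent to
\[
C_{p,q}(\Omega) \le \|E_{\Omega,\widetilde{\Omega}}\| \, C_{p,q}(\widetilde{\Omega}),
\]
so it suffices to prove the latter.

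To establish this, I would fix an arbitrary $u \in N^{1,p}(\Omega) = S^{1,p}(\Omega)$ and set $v = E_{\Omega,\widetilde{\Omega}}(u) \in S^{1,p}(\widetilde{\Omega})$, so that $v|_\Omega = u$ by the defining property of an extension operator. Applying the Sobolev--Poincaré inequality on $\widetilde{\Omega}$ to $v$, and using $\|v\|_{S^{1,p}(\widetilde{\Omega})} = \|\nabla_{min} v\|_{L^p(\widetilde{\Omega})}$ together with the operator norm bound, gives
\[
\inf_{c \in \mathbb{R}} \Big( \int_{\widetilde{\Omega}} |v - c|^q \, d\mu(x) \Big)^{1/q} \le C_{p,q}(\widetilde{\Omega}) \, \|\nabla_{min} v\|_{L^p(\widetilde{\Omega})} \le C_{p,q}(\widetilde{\Omega}) \, \|E_{\Omega,\widetilde{\Omega}}\| \, \|\nabla_{min} u\|_{L^p(\Omega)}.
\]
For the left-hand side, since $\Omega \subset \widetilde{\Omega}$ and $v = u$ on $\Omega$, for every fixed $c \in \mathbb{R}$ one has $\int_{\widetilde{\Omega}} |v - c|^q \, d\mu(x) \ge \int_\Omega |u - c|^q \, d\mu(x)$; passing to the infimum over $c$ on both sides preserves this, so the left-hand side above dominates $\inf_{c} \big( \int_\Omega |u - c|^q \, d\mu(x)\big)^{1/q}$. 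Combining the two estimates shows that $\|E_{\Omega,\widetilde{\Omega}}\| \, C_{p,q}(\widetilde{\Omega})$ is an admissible constant in \eqref{bc1} on $\Omega$, and since $u$ was arbitrary, minimality of $C_{p,q}(\Omega)$ yields $C_{p,q}(\Omega) \le \|E_{\Omega,\widetilde{\Omega}}\| \, C_{p,q}(\widetilde{\Omega})$, as required.

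I expect the two substantive points — as opposed to the bookkeeping — to be the following. First, one must justify that the characterization $\lambda_{p,q}(\cdot)^{-1/p} = C_{p,q}(\cdot)$ and the finiteness and positivity of the best constants are genuinely available on \emph{both} domains; this rests on the fact that a bounded $(p,p)$-extension domain is a Sobolev $(p,p^{\ast})$-embedding domain, proved by extending to $X$ and applying Theorem~\ref{embedding_1} on an enclosing ball (an argument parallel to Theorem~\ref{embedding_2}, but driven by the extension property rather than the measure density condition). Second, the monotonicity step under the infimum over $c$ must be carried out at the level of the pointwise-in-$c$ domain inequality $\int_{\widetilde{\Omega}}|v-c|^q \, d\mu(x) \ge \int_\Omega |u-c|^q \, d\mu(x)$, from which passing to infima is immediate; it is essential \emph{not} to attempt to identify the optimal constants $\widetilde{u}$ on the two domains, since these generally differ. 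Everything else follows directly from the definition $\|f\|_{S^{1,p}(\Omega)} = \|\nabla_{min} f\|_{L^p(\Omega)}$ and the operator norm bound.
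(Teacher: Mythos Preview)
Your proposal is correct and follows essentially the same route as the paper: both arguments first observe that $(p,p)$-extension domains are Sobolev $(p,p^{\ast})$-embedding domains (so that $\lambda_{p,q}^{-1/p}=C_{p,q}$ is available on each), and then run the identical chain $\inf_{c}\|u-c\|_{L^q(\Omega)}\le\inf_{c}\|E_{\Omega,\widetilde{\Omega}}u-c\|_{L^q(\widetilde{\Omega})}\le C_{p,q}(\widetilde{\Omega})\|\nabla_{min}E_{\Omega,\widetilde{\Omega}}u\|_{L^p(\widetilde{\Omega})}\le C_{p,q}(\widetilde{\Omega})\|E_{\Omega,\widetilde{\Omega}}\|\|\nabla_{min}u\|_{L^p(\Omega)}$. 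If anything, your write-up is slightly more explicit than the paper's about the monotonicity-under-infimum step and the reduction to comparing the Poincar\'e constants.
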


\begin{proof}
Because $\Omega$ and $\widetilde{\Omega}$ are $(p,p)$-extension domains, then there exist bounded extension operators
$$
E:  S^{1,p}(\Omega)\to S^{1,p}(B)\,\,\text{and}\,\,E:  S^{1,p}(\widetilde{\Omega})\to S^{1,p}(B),\,\,B\subset X,
$$
where a ball $B\supset \widetilde{\Omega}\supset \Omega$. Hence, because we have in the ball $B\subset X$ the $p$-Poincar\'e inequality \cite{HKST},  there exist bounded extension operators
$$
E:  N^{1,p}(\Omega)\to N^{1,p}(B)\,\,\text{and}\,\,E:  N^{1,p}(\widetilde{\Omega})\to N^{1,p}(B),\,\,B\subset X,
$$

Therefore taking into account above along with the doubling metric measure space $X$ supporting the weak $p$-Poincar\'e inequality, by Teorem~\ref{embedding_1} there exist compact embedding operators
$$
    i: N^{1,p}(\Omega) \to L^{q}(\Omega)\,\,\text{and}\,\,i: N^{1,p}(\widetilde{\Omega}) \to L^{q}(\widetilde{\Omega}), 1<q<p^*.
$$
So, the first non-trivial variational Neumann eigenvalues $\lambda_{p,q}(\Omega)$ and $\lambda_{p,q}(\widetilde{\Omega})$ are well defined in domains $\Omega$ and $\widetilde{\Omega}$ correspondingly.

There exists a bounded extension operator
$$
E_{\Omega,\widetilde{\Omega}}:  S^{1,p}(\Omega)\to S^{1,p}(\widetilde{\Omega}),
$$
such that
$$
(E_{\Omega,\widetilde{\Omega}}(u))(x)=
\begin{cases}
    u(x),\text{ if }x\in\Omega,\\
    v(x),\text{ if }x\in\widetilde{\Omega}\setminus\Omega,
\end{cases}
$$
where $v:\widetilde{\Omega}\setminus\Omega\to\mathbb{R}$ is the extension of the function $u$. Hence, for every $u\in N^{1,p}(\Omega)$, taking into account the inequality \eqref{bc1}, we have
\begin{multline*}
\|u-u_{\Omega}\|_{L^q(\Omega)}=\inf_{c\in\mathbb{R}}\|u-c\|_{L^q(\Omega)}
=\inf_{c\in\mathbb{R}}\|E_{\Omega,\widetilde{\Omega}}u-c\|_{L^q(\Omega)}\\
\leq \inf_{c\in\mathbb{R}}\|E_{\Omega,\widetilde{\Omega}}u-c\|_{L^q(\widetilde{\Omega})}
\leq C_{p,q}(\widetilde{\Omega})\|\nabla E_{\Omega,\widetilde{\Omega}}\|_{L^p(\widetilde{\Omega})}\\
\leq C_{p,q}(\widetilde{\Omega})\|E_{\Omega,\widetilde{\Omega}}\|\|\nabla u\|_{L^p(\Omega)}.
\end{multline*}

Since $\lambda_{p,q}^{-\frac{1}{p}}(\widetilde{\Omega})=C_{p,q}(\widetilde{\Omega})$, the result follows from the above estimate.
\end{proof}

\vskip 0.2cm

\noindent
{\bf Acknowledgement.} P. Garain thanks IISER Berhampur for the Seed grant: IISERBPR/RD/OO/2024/15, Date:
February 08, 2024.

\vskip 0.3cm

\noindent {\textsf{Prashanta Garain\\
Department of Mathematical Sciences,\\
Indian Institute of Science Education and Research Berhampur\\
Berhampur, Odisha 760010, India}\\
\textsf{e-mail}: pgarain92@gmail.com\\

\vskip 0.1cm

\noindent {\textsf{Alexander Ukhlov\\
Department of Mathematics,\\
Ben-Gurion University of the Negev,\\ P.O.Box 653, Beer Sheva, 8410501, Israel}\\
\textsf{e-mail}: ukhlov@math.bgu.ac.il\\

\end{document}